\theoremstyle{plain}
\newtheorem{lem}{Lemma}[section]
\newtheorem{prop}[lem]{Proposition}
\newtheorem{thm}[lem]{Theorem}
\newtheorem{intthm}{Theorem}
\theoremstyle{definition}
\newtheorem{defn}[lem]{Definition}
\newtheorem{ex}[lem]{Example}
\newtheorem{disc}[lem]{Remark}
\newtheorem{construction}[lem]{Construction}
\newtheorem{fact}[lem]{Fact}
\newtheorem{claim}{Claim}
\newtheorem*{convention*}{Convention}
\newtheorem{Proof}[lem]{Proof}
\newcommand{\catd}{\mathcal{D}}
\newcommand{\id}{\operatorname{id}}
\newcommand{\HH}{\operatorname{H}}
\newcommand{\im}{\operatorname{Im}}
\newcommand{\shift}{\mathsf{\Sigma}}
\newcommand{\Ker}{\operatorname{Ker}}
\newcommand{\ol}{\overline}
\newcommand{\wti}{\widetilde}
\newcommand{\bbz}{\mathbb{Z}}
\newcommand{\xra}{\xrightarrow}
\newcommand{\into}{\hookrightarrow}
\renewcommand{\geq}{\geqslant}
\renewcommand{\leq}{\leqslant}
\renewcommand{\ker}{\Ker}
\renewcommand{\hom}{\Hom}
\newcommand{\Ext}[4][R]{\operatorname{Ext}_{#1}^{#2}(#3,#4)}	
\newcommand{\Rhom}[3][R]{\mathbf{R}\!\operatorname{Hom}_{#1}(#2,#3)}	
\newcommand{\Otimes}[3][R]{#2\otimes_{#1}#3}
\newcommand{\Hom}[3][R]{\operatorname{Hom}_{#1}(#2,#3)}
\newcommand{\und}[1]{#1^{\natural}}
\newcommand{\yext}[4][R]{\operatorname{YExt}_{#1}^{#2}(#3,#4)}
\numberwithin{equation}{lem}
\begin{document}

\bibliographystyle{amsplain}

\title{Extension Groups for DG Modules}

\author{Saeed Nasseh}

\address{Saeed Nasseh, 
Department of Mathematical Sciences,
Georgia Southern University,
Statesboro, Georgia 30460, USA}

\email{snasseh@georgiasouthern.edu}
\urladdr{https://cosm.georgiasouthern.edu/math/saeed.nasseh}

\author{Sean Sather-Wagstaff}

\address{Sean Sather-Wagstaff, 
Department of Mathematical Sciences, 
Clemson University, 
O-110 Martin Hall, Box 340975, 
Clemson, S.C. 29634, 
USA}

\email{ssather@clemson.edu}
\urladdr{https://ssather.people.clemson.edu/}

\thanks{Sather-Wagstaff  was supported in part by North Dakota EPSCoR, 
National Science Foundation Grant EPS-0814442,
and NSA grant H98230-13-1-0215.}



\keywords{Differential graded algebras, differential graded modules, Yoneda Ext}
\subjclass[2010]{13D02,  13D07, 13D09}

\begin{abstract}
Let $M$ and $N$ be differential graded (DG) modules 
over a positively graded commutative DG algebra $A$.
We show that the Ext-groups $\Ext[A]iMN$ defined in terms of semi-projective resolutions are not
in general isomorphic to the Yoneda Ext-groups $\yext[A]iMN$ given in terms of
equivalence classes of extensions. On the other hand, we show that these groups are
isomorphic when the first DG module is semi-projective.
\end{abstract}

\maketitle


\section{Introduction} \label{sec0}

\begin{convention*}\label{conv110211a}
In this paper, $R$ is a commutative  
ring with identity.
\end{convention*}

Given two $R$-modules $M$ and $N$, a classical result originating with work of Baer~\cite{baer}
states that  $\Ext 1MN$, defined via projective/injective resolutions,
is isomorphic to the abelian group $\yext 1MN$ of equivalence classes of
exact sequences of the form $0\to N\to X\to M\to 0$. The purpose of this note is to discuss possible extensions of this
result to the
abelian category of differential graded (DG) modules over a positively graded commutative DG algebra $A$.
See Section~\ref{sec110211a} for background information on this category.

Specifically, we show that Baer's isomorphism fails in general in this context:
Examples~\ref{ex111018a'} and~\ref{ex111018a} exhibit DG $A$-modules
$M$, $N$ with $\Ext[A] 1MN\ncong\yext[A] 1MN$.
(See~\ref{disc110616a} and~\ref{fact110218d} below for definitions.)
On the other hand, the following result shows that a reasonable hypothesis on the first module
does yield such an isomorphism.

\begin{intthm}\label{lem110922a}
Let $A$ be a    DG $R$-algebra, and let $N$, $Q$ be 
DG $A$-modules such that $Q$ is semi-projective.  
Then there is an isomorphism  $\yext[A]iQN\cong\Ext[A]iQN$ of abelian groups for all $i\geq 1$.
\end{intthm}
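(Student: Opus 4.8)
The plan is to extract two consequences of the hypothesis that $Q$ is semi-projective and then to run a Baer-type argument in degree one, bootstrapping to higher degrees by dimension shifting. The first consequence is that, since a semi-projective DG module is a summand of a semifree one, the underlying graded $\und{A}$-module $\und{Q}$ is projective; hence every epimorphism of DG $A$-modules onto $Q$ splits in the category of graded modules (forgetting differentials). The second is that a semi-projective resolution of $Q$ may be taken to be $\id_Q$, so that $\Ext[A]iQN\cong\HH^i(\Hom[A]QN)$, whose classes are homotopy classes of chain maps $Q\to\shift^iN$. I would also record a refinement of the first point: because $\und{Q}$ is projective, the functor $\Hom[A]Q{-}$ is exact on underlying graded modules, and therefore carries any short exact sequence of DG modules in the second variable to a short exact sequence of Hom-complexes.

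For $i=1$ I would reprove Baer's theorem directly. Given an extension $0\to N\to X\to Q\to 0$, the graded splitting identifies $\und{X}\cong\und{N}\oplus\und{Q}$, under which the differential of $X$ acquires the form $\left(\begin{smallmatrix}\partial^N & h\\ 0 & \partial^Q\end{smallmatrix}\right)$ for a graded map $h$; the relation $\partial_X^2=0$ forces $h$ to be a cocycle in $\Hom[A]QN$, and changing the splitting alters $h$ by a coboundary. This assigns to the extension a well-defined class in $\HH^1(\Hom[A]QN)=\Ext[A]1QN$, with equivalent extensions giving the same class. Conversely a cocycle $h$ produces the DG module $\und{N}\oplus\und{Q}$ equipped with the above twisted differential, hence an extension. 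I would then check that these assignments are mutually inverse and carry the Baer sum to addition, yielding an isomorphism $\yext[A]1QN\cong\Ext[A]1QN$ that is natural in $N$.

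To pass to $i\geq 2$ I would exploit that both $\yext[A]{\bullet}Q{-}$ and $\Ext[A]{\bullet}Q{-}$ are cohomological $\delta$-functors in the second variable: for Yoneda Ext this is the standard long exact sequence, and for $\Ext[A]{\bullet}Q{-}=\HH^{\bullet}(\Hom[A]Q{-})$ it follows from the graded exactness of $\Hom[A]Q{-}$ noted above. The abelian category of DG $A$-modules is Grothendieck, so it has enough injectives, and its injective objects are contractible (any object embeds in a contractible one via a cone, and a mono out of an injective splits). Embedding $0\to N\to J\to N'\to 0$ with $J$ injective therefore kills both $\yext[A]iQJ$ and $\Ext[A]iQJ$ for $i\geq 1$, the latter because $\Hom[A]QJ$ inherits a contracting homotopy. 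The connecting maps then give natural isomorphisms $\yext[A]{i-1}Q{N'}\cong\yext[A]iQN$ and $\Ext[A]{i-1}Q{N'}\cong\Ext[A]iQN$ for $i\geq 2$, and splicing these with the degree-one isomorphism---using its naturality in $N$---delivers $\yext[A]iQN\cong\Ext[A]iQN$ for all $i\geq 1$ by induction.

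I expect the main obstacle to lie in the degree-one step: verifying that the class of an extension is independent of the graded splitting and of the representative extension, that the construction is additive for the Baer sum, and, above all, that the resulting isomorphism is natural in $N$ and compatible with the connecting homomorphisms, since it is precisely that compatibility which allows the dimension shift to glue across degrees. A secondary point demanding care is the structural claim that the injective objects of the DG-module category are contractible, as this is what simultaneously effaces both $\delta$-functors. It is exactly the two consequences of semi-projectivity---the graded splitting and the graded exactness of $\Hom[A]Q{-}$---that drive the argument, and their failure for general $Q$ is what the counterexamples in Examples~\ref{ex111018a'} and~\ref{ex111018a} must exploit.
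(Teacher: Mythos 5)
Your proposal is correct, and its degree-one core is essentially the paper's own argument: the graded splitting furnished by graded-projectivity of $Q$, the twisted differential $\left(\begin{smallmatrix}\partial^N&\lambda\\ 0&\partial^Q\end{smallmatrix}\right)$, the cocycle/coboundary bookkeeping, and the Baer-sum check are exactly Construction~\ref{c131221a}, Lemma~\ref{lem131221azz}, and Theorem~\ref{thm131221a}. Where you genuinely diverge is the passage to $i\geq 2$. The paper shifts dimension in the \emph{first} variable: it takes a resolution $\cdots\to L_1\to L_0\to Q\to 0$ by categorically projective DG $A$-modules (available by Remark~\ref{disc110616a''}), notes that each $L_j$ is acyclic and categorically projective --- so both $\Ext[A]i{L_j}{-}$ and $\yext[A]i{L_j}{-}$ vanish in the relevant degrees --- observes that the syzygies $Q_i=\im\partial^L_i$ remain graded-projective, and then applies Theorem~\ref{thm131221a} to $Q_{i-1}$. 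You instead shift in the \emph{second} variable: embed $N$ into an injective $J$, argue that injective DG modules are contractible (being summands of $\cone$ of an identity), so both $\delta$-functors are effaced, and induct along the connecting maps. Your route is sound, but it requires an input the paper never has to supply: that the category of DG $A$-modules has enough injectives (e.g., because it is a Grothendieck category, with disk-type generators such as $\cone(\id_{\shift^nA})$); the paper only ever invokes ``enough projectives,'' which it records explicitly. Your observation that projectivity of $\und{Q}$ makes $\Hom[A]Q{-}$ carry short exact sequences of DG modules to short exact sequences of complexes is indeed the right justification that $\HH_{-\bullet}(\Hom[A]Q{-})$ is a $\delta$-functor here. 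Finally, the naturality you single out as the ``main obstacle'' is not actually needed for the statement as it stands: the theorem asserts only the existence of isomorphisms, so the induction may simply compose $\yext[A]iQN\cong\yext[A]{i-1}Q{N'}\cong\Ext[A]{i-1}Q{N'}\cong\Ext[A]iQN$, applying the inductive hypothesis to $N'$; naturality would matter only if you wanted an isomorphism of $\delta$-functors, which neither the statement nor the paper's proof demands.
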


This  is the main result of Section~\ref{sec111103a};
see Proof~\ref{proof160326a}. 
In the subsequent Section~\ref{sec160326a}, we discuss some properties of YExt with respect to truncations.

It is worth noting here that we apply results from this paper in
our answer to a question of Vasconcelos in~\cite{nasseh:gart}. 
Specifically, in that paper, we investigate DG $A$-modules $C$ with $\Ext[A]1CC=0$ using geometric techniques.
These techniques yield an isomorphism between $\yext[A]1CC$ and a certain quotient of tangent spaces;
it is then important for us to know when the vanishing of $\Ext[A]1CC$ implies the vanishing of related $\operatorname{YExt}^1$-modules;
see Proposition~\ref{prop111110a} below.

\section{DG Modules}
\label{sec110211a}

We assume that the reader is familiar with the category of $R$-complexes and
the derived category $\catd(R)$.
Standard references for these topics 
are~\cite{christensen:gd,christensen:dcmca,gelfand:moha,hartshorne:rad,verdier:cd,verdier:1}.
For clarity, we include some definitions and notation.

\begin{defn}
  \label{cx}
In this paper, complexes of $R$-modules (``$R$-complexes'' for short) are indexed homologically:
$M = \cdots \xra{\partial_{n+2}^{M}} M_{n+1} \xra{\partial_{n+1}^{M}} M_n
\xra{\partial_{n}^{M}} M_{n-1} \xra{\partial_{n-1}^{M}} \cdots$.
The degree of an element $m\in M$ is denoted $|m|$.
The \emph{infimum} and \emph{supremum} of $M$ are the infimum and supremum, respectively, of the set
$\{n\in\bbz\mid\HH_n(M)\neq 0\}$.
The \emph{tensor product} of two $R$-complexes $M,N$ is denoted $\Otimes MN$,
and the \emph{Hom complex} is denoted $\Hom MN$.
A \emph{chain map} $M\to N$ is a cycle  in $\Hom MN_0$.
\end{defn}

Next we discuss  DG algebras and DG modules, which are treated  in, e.g., 
\cite{avramov:ifr,avramov:dgha,avramov:bsolrhoffd,beck:sgidgca,felix:rht,keller:ddgc}.
We follow the notation and terminology from~\cite{avramov:dgha,beck:sgidgca}; given the slight differences in the literature,
though, we include  a summary next.

\begin{defn}
  \label{DGK}
  A \emph{positively graded commutative differential graded $R$-algebra} (\emph{DG $R$-algebra} for short)
  is an $R$-complex $A$ equipped with a
  chain map
  $\mu^A\colon A\otimes_RA\to A$ with $ab:=\mu^A(a\otimes b)$
  that is associative, unital, and graded commutative such that $A_i=0$ for $i<0$.
The map $\mu^A$ is the \emph{product} on $A$.
Given a DG $R$-algebra $A$, the \emph{underlying algebra} is the
graded commutative  $R$-algebra
$\und{A}=\oplus_{i\geq 0} A_i$.

A \emph{differential graded module over} a DG $R$-algebra $A$
(\emph{DG $A$-module} for short) is an $R$-complex $M$ with a
chain map
$\mu^M\colon \Otimes AM\to M$ such that the rule $am:=\mu^M(a\otimes m)$
is associative and unital.
The map $\mu^M$ is the \emph{scalar multiplication} on $M$.
The \emph{underlying $\und{A}$-module} associated to $M$ is the
$\und{A}$-module
$\und{M}=\oplus_{i\in\bbz} M_i$.

The DG $A$-module $\Hom[A]MN$ is the subcomplex of $\Hom MN$  of
the $A$-linear homomorphisms.
A \emph{morphism} $M\to N$ of DG $A$-modules is a cycle in $\Hom[A] MN_0$.
Projective objects in the category of DG $A$-modules are called \emph{categorically
projective}.
Quasiisomorphisms of DG $A$-modules are identified by the
symbol $\simeq$,  also used for the ``quasiisomorphic'' equivalence relation.
\end{defn}

Two important DG $R$-algebras to keep in mind are $R$ itself and, more generally, 
the Koszul complex over $R$ (on a finite sequence of elements of $R$)
with the exterior product.
A DG $R$-module is just an $R$-complex, and a morphism of DG $R$-modules is simply a chain map.

\begin{disc}\label{disc110616a''}
Let $A$ be a    DG $R$-algebra.
The category of DG $A$-modules is an  abelian category with enough projectives.
\end{disc}

\begin{defn}\label{disc110616a}
Let $A$ be a    DG $R$-algebra, and let $M$, $N$ be DG $A$-modules.
For each $i\geq 0$
we have a well-defined \emph{Yoneda Ext group} $\yext[A] iMN$,
defined in terms of a resolution of $M$ by categorically projective DG $A$-modules:
$$\cdots \to Q_1\to Q_0\to M\to 0.$$
A standard result shows that  $\yext[A] 1MN$
is isomorphic to the set of equivalence classes of 
exact sequences 
$0\to N\to X\to M\to 0$
of DG $A$-modules under the Baer sum; see, e.g., \cite[(3.4.6)]{weibel:iha} and the proof of Theorem~\ref{thm131221a}.
\end{defn}

We now turn to the derived category $\catd(A)$, and related notions.

\begin{defn}
  \label{DGK2}
Let $A$ be a DG $R$-algebra.
A DG $A$-module $Q$ is \emph{graded-projective} if
$\Hom[A]Q-$ preserves surjective morphisms, that is, if
$\und Q$ is a projective graded $\und R$-module.
The DG module $Q$ is \emph{semi-projective} if $\Hom[A]Q-$ respects surjective quasiisomorphisms,
that is, if $Q$ is graded-projective and respects quasiisomorphisms.
A \emph{semi-projective resolution} of  $M$ is a quasiisomorphism
$L\xra{\simeq}M$ of DG $A$-modules such that $L$ is semi-projective.
\end{defn}

\begin{fact}\label{fact110218d}
Let $A$ be a DG $R$-algebra.
Then every DG $A$-module has a semi-projective resolution.
\end{fact}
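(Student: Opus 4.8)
The plan is to construct a semifree resolution of the given DG $A$-module and then invoke the standard fact that semifree DG modules are semiprojective. Recall that a DG $A$-module $L$ is \emph{semifree} if it carries an exhaustive increasing filtration $0=L^{(-1)}\subseteq L^{(0)}\subseteq L^{(1)}\subseteq\cdots$ by DG $A$-submodules with $L=\bigcup_n L^{(n)}$ such that each subquotient $L^{(n)}/L^{(n-1)}$ is a direct sum of shifts $\shift^j A$. For such an $L$ the underlying module $\und L$ is free over $\und A$, so $L$ is graded-projective; moreover a lifting argument along the filtration shows that $\Hom[A]L-$ respects surjective quasiisomorphisms, so $L$ is semiprojective in the sense of Definition~\ref{DGK2}. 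Thus it suffices to produce a semifree DG $A$-module $L$ together with a quasiisomorphism $\varepsilon\colon L\res M$.

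I would build $L$ and $\varepsilon$ by the \emph{killing cycles} method, as a colimit $L=\colim_n L^{(n)}$ of semifree modules with compatible morphisms $\varepsilon_n\colon L^{(n)}\to M$. For the base step, choose a set of homogeneous cycles $\tilde z\in M$ whose classes generate $\HH(M)$ as a module over $\HH(A)$; since the canonical generator of $\shift^{|\tilde z|}A$ is a cycle, the rule $1\mapsto\tilde z$ defines a morphism $\shift^{|\tilde z|}A\to M$, and assembling these over all the chosen cycles yields $\varepsilon_0\colon L^{(0)}\to M$ from a graded-free DG module with $\HH(\varepsilon_0)$ surjective. For the inductive step, suppose $\varepsilon_n\colon L^{(n)}\to M$ is given with $\HH(\varepsilon_n)$ surjective. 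To annihilate $\ker\HH(\varepsilon_n)$, choose for each generator a representing cycle $z\in L^{(n)}$; as $\varepsilon_n(z)$ is a boundary we may write $\varepsilon_n(z)=\partial^M(w)$, and I adjoin a free generator $e$ in degree $|z|+1$ with $\partial e=z$, extending the map by $e\mapsto w$. This gives $\varepsilon_{n+1}\colon L^{(n+1)}\to M$, still surjective on homology, in which the chosen classes have become boundaries. Passing to the colimit, $\varepsilon\colon L\res M$ is surjective on homology and, because every offending cycle is eventually killed, injective on homology; hence it is a quasiisomorphism.

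The main obstacle is the bookkeeping that makes the construction actually converge: one must arrange the filtration so that it is exhaustive and so that $\HH(\varepsilon)$ is injective, taking care that adjoining generators to kill one class does not leave uncontrolled homology in play. This is handled by a double induction on the filtration stage and on homological degree; here the hypothesis that $A$ is positively graded is what keeps matters degreewise bounded, since each summand $\shift^j A$ contributes to $\und L$ only in degrees $\geq j$, so that in any fixed degree the process stabilizes after finitely many stages. The remaining point — that a semifree DG module is semiprojective — is standard, and I would cite it from \cite{avramov:ifr} or \cite{felix:rht} rather than reprove the filtration-lifting argument here.
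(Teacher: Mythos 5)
The paper gives no proof of this statement at all: it is recorded as a Fact and implicitly cited from the standard DG references (\cite{avramov:ifr,avramov:dgha,felix:rht,keller:ddgc}), and your killing-cycles construction of a semifree resolution, followed by the standard implication that semifree DG modules are semi-projective, is exactly the argument those sources give; it is correct. One caveat: your closing remark that positivity of $A$ forces the process to stabilize in each fixed degree after finitely many stages is not true when $M$ is an unbounded DG module (killing classes in degree $d-1$ can create new kernel classes in degree $d-1$ at the next stage, via relations among the killed cycles, so generators of a fixed degree may be adjoined at infinitely many stages). This inaccuracy is harmless, however, because your filtered-colimit argument --- every class of $\ker\HH(\varepsilon_n)$ dies in $L^{(n+1)}$, and any cycle of $L$ lies in some $L^{(n)}$ --- already shows that $\varepsilon$ is a quasiisomorphism with no degreewise stabilization needed, and indeed the result holds without any positivity hypothesis on $A$.
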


\begin{defn}\label{def160326a}
Let $A$ be a DG $R$-algebra.
The derived category $\catd(A)$ is formed from the category of DG $A$-modules
by formally inverting the quasiisomorphisms; see~\cite{keller:ddgc}.
Isomorphisms in $\catd(A)$  are identified by the symbol $\simeq$.

The  derived functor $\Rhom[A]MN$ is defined via a semi-projective resolution
$P\xra\simeq M$,
as $\Rhom[A]MN\simeq\Hom[A]PN$.
For each $i\in\bbz$,
set  $\Ext[A]iMN:=\HH_{-i}(\Rhom[A]MN)$.
\end{defn}

\section{DG Ext vs. Yoneda Ext}\label{sec111103a}

We begin this section with  examples of DG $A$-modules 
$M$ and $N$ such that $\Ext[A] 1MN\ncong\yext[A] 1MN$.
These present two facets of the distinctness of Ext and YExt,
as the first example has $M$ and $N$ both bounded, while the second one (from 
personal communication with Avramov) has $M$ graded-projective.

\begin{ex}\label{ex111018a'}
Let $R=k[\![X]\!]$, and consider the following exact sequence of DG $R$-modules,
i.e., exact sequence of $R$-complexes:
$$\xymatrix{
0\ar[r]
&\underline R\ar[r]
&\underline R\ar[r]
&\underline k\ar[r]
&0 \\
& 0\ar[d]
& 0\ar[d]
& 0\ar[d] \\
0\ar[r]
& R\ar[r]^-X\ar[d]_-1
& R\ar[r]\ar[d]_-1
& k\ar[r]\ar[d]_-1
&0 \\
0\ar[r]
& R\ar[r]^-X\ar[d]
& R\ar[r]\ar[d]
& k\ar[r]\ar[d]
&0 \\
&0&0&0.
}$$
This sequence does not split over $R$ (it is not even degree-wise split) so 
it gives a non-trivial class in $\yext 1{\underline k}{\underline R}$,
and we conclude that $\yext 1{\underline k}{\underline R}\neq 0$.
On the other hand, $\underline k$ is homologically trivial,
so we have $\Ext 1{\underline k}{\underline R}=0$ since
$0$ is a semi-free resolution of $\underline k$.
\end{ex}

\begin{ex}\label{ex111018a}
Let $R=k[X]/(X^2)$ and consider the following exact graded-projective DG $R$-module
$M=\cdots\xra XR\xra XR\xra X\cdots$.
Since $M$ is exact, we have $\Ext iMM=0$ for all $i$.
We claim, however, that $\yext 1MM\neq 0$. 
To see this, first note that $M$ is isomorphic to the suspension $\shift M$ and that $M$ is not contractible.
Thus, the mapping cone sequence for the identity morphism $\id_M$
is isomorphic to one of the form
$0\to M\to X\to M\to 0$
and is not split.
\end{ex}

The  definition of the isomorphism $\yext[A]iQN\to\Ext[A]iQN$
for $i=1$ in Theorem~\ref{lem110922a} is contained in the following construction. 
The subsequent lemma and theorem show that $\Psi$ is a well-defined isomorphism.

\begin{construction}\label{c131221a}
Let $A$ be a DG $R$-algebra, and let $N$, $Q$ be 
DG $A$-modules such that $Q$ is graded-projective.  
Define $\Psi\colon\yext[A]1QN\to\HH_{-1}(\Hom[A]QN)$ as follows. 
Note that if $Q$ is semi-projective, then $\Ext[A]1QN\cong\HH_{-1}(\Hom[A] QN)$,
which fits with what we have in Theorem~\ref{lem110922a}.

Let $\zeta\in\yext[A]1QN$ be represented by the  sequence
\begin{equation}\label{eq111018a}
0\to N\xra f X\xra g Q\to 0.
\end{equation}
Since $Q$ is graded-projective, this sequence
is \emph{graded-split}, that is
there are elements $h\in\hom[A]XN_0$ and $k\in\Hom[A]QX_0$ with
\begin{align*}
hf&=\id_N
&gk&=\id_Q
&hk&=0
&fh+kg&=\id_X.
\end{align*}
Thus, the sequence~\eqref{eq111018a} is isomorphic
to one of the form
\begin{equation}\label{eq111018b}
\begin{split}
\xymatrix{
&\vdots\ar[d]_-{\partial^N_{i+1}}
&\vdots\ar[d]_-{\partial^X_{i+1}}
&\vdots\ar[d]_-{\partial^Q_{i+1}}
\\
0\ar[r]
&N_i\ar[d]_{\partial^N_{i}}\ar[r]^-{\epsilon_i}
&N_i \oplus  Q_i\ar[d]_{\partial^X_{i}}\ar[r]^-{\pi_i}
&Q_i\ar[d]_{\partial^Q_{i}}\ar[r]
&0
\\
0\ar[r]
&N_{i-1}\ar[d]_-{\partial^N_{i-1}}\ar[r]^-{\epsilon_{i-1}}
&N_{i-1} \oplus Q_{i-1}\ar[d]_-{\partial^X_{i-1}}\ar[r]^-{\pi_{i-1}}
&Q_{i-1}\ar[d]_-{\partial^Q_{i-1}}\ar[r]
&0
\\
&\vdots
&\vdots
&\vdots
}
\end{split}
\end{equation}
where $\epsilon_j$ is the natural inclusion and $\pi_j$ is the natural surjection
for each $j$.
Since this diagram comes from a graded-splitting of~\eqref{eq111018a},
the scalar multiplication on the middle column of~\eqref{eq111018b} is the natural one
$a\left[\begin{smallmatrix}p\\ q\end{smallmatrix}\right]=\left[\begin{smallmatrix}ap\\ aq\end{smallmatrix}\right]$.
(We write elements of $N_i\oplus Q_i$ as column vectors.) 

The fact that~\eqref{eq111018b} commutes implies that $\partial^X_i$ has a specific form:
\begin{equation}\label{eq111018c}
\partial^X_i=
\left[\begin{smallmatrix}
\partial^N_i & \lambda_i \\
0 & \partial^Q_i
\end{smallmatrix}\right].
\end{equation}
Here, we have $\lambda_i\colon Q_i\to N_{i-1}$, that is,
$\lambda=\{\lambda_i\}\in\Hom Q{N}_{-1}$. Since the 
horizontal maps in the sequence~\eqref{eq111018b}
are morphisms of DG $A$-modules, it follows that $\lambda$ is a cycle in
$\Hom[A] Q{N}_{-1}$. Thus, $\lambda$ represents a homology class in
$\HH_{-1}(\Hom[A] QN)$, and we define $\Psi\colon\yext[A]1QN\to\HH_{-1}(\Hom[A] QN)$ by setting
$\Psi(\zeta)$ equal to $[\lambda]$ the homology class of $\lambda$ in $\HH_{-1}(\Hom[A] QN)$.
\end{construction}

\begin{lem}\label{lem131221azz}
Let $A$ be a DG $R$-algebra, and let $N$, $Q$ be 
DG $A$-modules such that $Q$ is graded-projective.  
Then the map $\Psi\colon\yext[A]1QN\to\HH_{-1}(\Hom[A] QN)$ from Construction~\ref{c131221a}
is well-defined.
\end{lem}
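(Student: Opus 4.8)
The plan is to verify the two independences hidden in Construction~\ref{c131221a}: first, that the class $[\lambda]$ does not depend on the chosen graded-splitting of a fixed representative sequence~\eqref{eq111018a}; and second, that it does not depend on the representative chosen within the Baer equivalence class $\zeta$. I first note that a graded-splitting $(h,k)$ is completely determined by the section $k\in\Hom[A]QX_0$ of $g$, since the relation $fh+kg=\id_X$ together with the injectivity of $f$ forces $h=f^{-1}(\id_X-kg)$. So throughout I only need to track the dependence on $k$, and I record the single identity that drives everything: reading $\partial^X$ off~\eqref{eq111018c} in the coordinates supplied by $k$ gives $f\lambda=\partial^X k-k\partial^Q$.

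For independence of the splitting, suppose $k,k'$ are two sections of $g$. Then $g(k-k')=0$, so $k-k'$ factors through $\ker g=\im f$; by injectivity of $f$ there is a unique $s\in\Hom[A]QN_0$ with $fs=k-k'$. Subtracting the two instances of the driving identity and using that $f$ is a morphism of DG $A$-modules (so $\partial^X f=f\partial^N$), I compute $f(\lambda-\lambda')=\partial^X(k-k')-(k-k')\partial^Q=f(\partial^N s-s\partial^Q)$, whence $\lambda-\lambda'=\partial^N s-s\partial^Q$ after cancelling the injection $f$. The right-hand side is exactly the image of $s$ under the differential of $\Hom[A]QN$, so $\lambda-\lambda'$ is a boundary and $[\lambda]=[\lambda']$. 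This bookkeeping---keeping the signs and the passage through $f$ straight---is the main obstacle; everything else is formal.

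For independence of the representative, recall that an equivalence of extensions is a morphism $\vf\colon X\to X'$ of DG $A$-modules restricting to the identity on $N$ and $Q$; by the short five lemma $\vf$ is an isomorphism. Given a section $k$ of $g$, the composite $\vf k$ is a section of $g'$ because $g'\vf=g$. Applying the driving identity in $X'$ to this section and using that $\vf$ is a chain map with $\vf f=f'$, I get $f'\lambda'=\partial^{X'}(\vf k)-(\vf k)\partial^Q=\vf(\partial^X k-k\partial^Q)=\vf f\lambda=f'\lambda$, so $\lambda'=\lambda$ on the nose. Thus some splitting of the second sequence produces the identical cycle $\lambda$, and by the first part every splitting of it produces the same class. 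Combining the two parts shows that $\Psi(\zeta)=[\lambda]$ depends only on $\zeta$, so $\Psi$ is well-defined.
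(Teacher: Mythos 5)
Your proof is correct, and it establishes exactly what Construction~\ref{c131221a} requires, but it is organized differently from the paper's argument, so a comparison is worthwhile. The paper folds both independences into a single step: it takes two representatives of $\zeta$ already put into the standard split form~\eqref{eq111018b} and~\eqref{eq111018d}, invokes the equivalence of extensions between them, and observes that the commutativity constraints force the equivalence to be unipotent upper-triangular, $\left[\begin{smallmatrix} 1 & v_i\\ 0 & 1\end{smallmatrix}\right]$ with $v\in\Hom[A]QN_0$, whence $\lambda-\lambda'=\partial^Nv-v\partial^Q$ is a boundary. You instead work coordinate-free with the sections themselves: your ``driving identity'' $f\lambda=\partial^Xk-k\partial^Q$ isolates the dependence of the cycle on $k$; your step (a) extracts the homotopy $s$ from the difference of two sections via $fs=k-k'$; and your step (b) shows that transporting a section through an equivalence $\vf$ leaves the cycle $\lambda$ \emph{literally} unchanged, not merely unchanged up to boundary. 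The paper's matrix entry $v$ and your $s$ are the same data in different clothes, so the key mechanism (exhibiting the difference of cycles as the boundary of a degree-zero $A$-linear map $Q\to N$) is identical; what your organization buys is a clean separation of concerns---splitting-independence versus representative-independence---plus the sharper observation in step (b), while the paper's one-step argument is shorter because representing both sequences in standard form makes the choice of splitting invisible. One detail you should make explicit: the map $s$ with $fs=k-k'$ lies in $\Hom[A]QN_0$ because $k-k'$ and $f$ are $A$-linear and $f$ is injective (so $f(s(aq))=a(k-k')(q)=f(as(q))$ forces $s(aq)=as(q)$); you assert this membership without justification, and it is the one place where $A$-linearity, rather than mere $R$-linearity, must be checked.
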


\begin{proof}
Let $\zeta\in\yext[A]1QN$ be represented by the
sequence~\eqref{eq111018b}, and let $\zeta$ be represented by another exact sequence
\begin{equation}\label{eq111018d}
\begin{split}
\xymatrix{
&\vdots\ar[d]_-{\partial^N_{i+1}}
&\vdots\ar[d]_-{\partial^{X'}_{i+1}}
&\vdots\ar[d]_-{\partial^Q_{i+1}}
\\
0\ar[r]
&N_i\ar[d]_{\partial^N_{i}}\ar[r]^-{\epsilon_i}
&N_i \oplus  Q_i\ar[d]_{\partial^{X'}_{i}}\ar[r]^-{\pi_i}
&Q_i\ar[d]_{\partial^Q_{i}}\ar[r]
&0
\\
0\ar[r]
&N_{i-1}\ar[d]_-{\partial^N_{i-1}}\ar[r]^-{\epsilon_{i-1}}
&N_{i-1}\oplus  Q_{i-1}\ar[d]_-{\partial^{X'}_{i-1}}\ar[r]^-{\pi_{i-1}}
&Q_{i-1}\ar[d]_-{\partial^Q_{i-1}}\ar[r]
&0
\\
&\vdots
&\vdots
&\vdots
}
\end{split}
\end{equation}
where 
\begin{equation}\label{eq111018e}
\partial^{X'}_i=
\left[\begin{smallmatrix}
\partial^N_i & \lambda_i '\\
0 & \partial^Q_i
\end{smallmatrix}\right].
\end{equation}
We need to show that $\lambda-\lambda'\in\im(\partial_0^{\Hom[A]{Q}{N}})$.
The sequences~\eqref{eq111018b}
and~\eqref{eq111018d} are equivalent in $\yext 1QN$,
so for each $i$ there is a commutative diagram
\begin{equation}\label{eq111018f}
\begin{split}
\xymatrix{
0\ar[r]
&N_i\ar[d]_{=}\ar[r]^-{\epsilon_i}
&N_i \oplus  Q_i
\ar[d]_{\left[\begin{smallmatrix}
u_i & v_i \\
w_i & x_i
\end{smallmatrix}\right]}^{\cong}\ar[r]^-{\pi_i}
&Q_i\ar[d]_{=}\ar[r]
&0
\\
0\ar[r]
&N_{i}\ar[r]^-{\epsilon_{i}}
&N_{i}\oplus  Q_{i}\ar[r]^-{\pi_{i}}
&Q_{i}\ar[r]
&0
}
\end{split}
\end{equation}
where the middle vertical arrow describes a DG $A$-module isomorphism,
and such that the following diagram commutes for all $i$
\begin{equation}\label{eq111018g}
\begin{split}
\xymatrix@C=20mm@R=10mm{
N_i \oplus  Q_i
\ar[d]_{\left[\begin{smallmatrix}
\partial^N_i & \lambda_i \\
0 & \partial^Q_i
\end{smallmatrix}\right]}
\ar[r]_\cong^{\left[\begin{smallmatrix}
u_{i} & v_{i} \\
w_{i} & x_{i}
\end{smallmatrix}\right]}
&N_{i} \oplus  Q_{i}
\ar[d]^{\left[\begin{smallmatrix}
\partial^N_i & \lambda'_i \\
0 & \partial^Q_i
\end{smallmatrix}\right]}\\
N_{i-1} \oplus  Q_{i-1}
\ar[r]_\cong^{\left[\begin{smallmatrix}
u_{i-1} & v_{i-1} \\
w_{i-1} & x_{i-1}
\end{smallmatrix}\right]}
&N_{i-1} \oplus  Q_{i-1}.
}
\end{split}
\end{equation}
The fact that diagram~\eqref{eq111018f} commutes implies that
$u_i=\id_{N_i}$, 
$x_i=\id_{Q_i}$, and
$w_i=0$.
Also, the fact that the middle vertical arrow in diagram~\eqref{eq111018f} describes a
DG $A$-module morphism implies that the sequence
$v_i\colon Q_i\to N_i$ respects scalar multiplication, i.e., we have
$v\in\Hom[A]QN_0$.
The fact that diagram~\eqref{eq111018g} commutes implies that
$\lambda_i-\lambda_i'=\partial^N_iv_i-v_{i-1}\partial^Q_i$.
We conclude that
$\lambda-\lambda'=\partial_0^{\Hom[A]{Q}{N}}(v)\in\im(\partial_0^{\Hom[A]{Q}{N}})$,
so $\Psi$ is well-defined.
\end{proof}

The next result contains the case $i=1$ of Theorem~\ref{lem110922a} from the introduction, because
if $Q$ is semi-projective, then $\Ext[A]1QN\cong\HH_{-1}(\Hom[A] QN)$.

\begin{thm}\label{thm131221a}
Let $A$ be a DG $R$-algebra, and let $N$, $Q$ be 
DG $A$-modules such that $Q$ is graded-projective.  
Then the map $\Psi\colon\yext[A]1QN\to\HH_{-1}(\Hom[A] QN)$ from Construction~\ref{c131221a}
is a group isomorphism.
\end{thm}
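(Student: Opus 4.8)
The plan is to show that the map $\Psi$ from Construction~\ref{c131221a} is a homomorphism of abelian groups with a two-sided inverse, thereby establishing that it is an isomorphism. The well-definedness has already been handled in Lemma~\ref{lem131221azz}, so I would divide the work into three parts: additivity of $\Psi$, construction of an inverse map $\Phi\colon\HH_{-1}(\Hom[A]QN)\to\yext[A]1QN$, and verification that $\Phi$ and $\Psi$ are mutually inverse. The additivity is the step I would address first, since it only requires understanding how the Baer sum interacts with the off-diagonal entries $\lambda_i$. Concretely, given two classes represented by sequences with connecting data $\lambda$ and $\lambda'$, the Baer sum is computed as a pullback along the diagonal $Q\to Q\oplus Q$ followed by a pushout along the codiagonal $N\oplus N\to N$; carrying out this operation on the graded-split representatives of the form~\eqref{eq111018b} yields a representative whose off-diagonal block is exactly $\lambda+\lambda'$, so that $\Psi(\zeta+\zeta')=[\lambda+\lambda']=[\lambda]+[\lambda']$. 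This is a direct calculation once one tracks the matrix forms through the pullback/pushout, and it uses the graded-projectivity of $Q$ only insofar as it guarantees every class is representable in the split form.

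The heart of the argument is the construction of the inverse $\Phi$. Given a cycle $\lambda\in\Hom[A]QN_{-1}$, I would build a DG $A$-module $X_\lambda$ whose underlying graded module is $\und N\oplus\und Q$ with the differential prescribed exactly as in~\eqref{eq111018c},
\begin{equation*}
\partial^{X_\lambda}=
\left[\begin{smallmatrix}
\partial^N & \lambda \\
0 & \partial^Q
\end{smallmatrix}\right],
\end{equation*}
and scalar multiplication the diagonal one. The key verification is that this $\partial^{X_\lambda}$ squares to zero and is $A$-linear precisely because $\lambda$ is a cycle in $\Hom[A]QN_{-1}$: the condition $\partial^N\lambda+\lambda\partial^Q=0$ is exactly the statement that $(\partial^{X_\lambda})^2=0$, and $A$-linearity of $\lambda$ gives that $\partial^{X_\lambda}$ is a derivation over $A$. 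Then the natural inclusion $\epsilon\colon N\to X_\lambda$ and projection $\pi\colon X_\lambda\to Q$ are morphisms of DG $A$-modules, producing a short exact sequence $0\to N\to X_\lambda\to Q\to 0$; I set $\Phi([\lambda])$ equal to its class in $\yext[A]1QN$. Well-definedness of $\Phi$ on homology classes mirrors the computation in the proof of Lemma~\ref{lem131221azz}: if $\lambda-\lambda'=\partial_0^{\Hom[A]QN}(v)$ for some $v\in\Hom[A]QN_0$, then the matrix $\left[\begin{smallmatrix} \id & v \\ 0 & \id\end{smallmatrix}\right]$ furnishes an isomorphism $X_\lambda\cong X_{\lambda'}$ compatible with $\epsilon$ and $\pi$, hence an equivalence of extensions.

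Finally I would check $\Psi\Phi=\id$ and $\Phi\Psi=\id$. The composite $\Psi\Phi$ is essentially immediate: applying $\Psi$ to the sequence $0\to N\to X_\lambda\to Q\to 0$ recovers the off-diagonal block $\lambda$ by construction, so $\Psi(\Phi([\lambda]))=[\lambda]$. For $\Phi\Psi$, given a class $\zeta$ represented by~\eqref{eq111018a}, the graded-splitting identifies that sequence up to isomorphism with~\eqref{eq111018b}, which is precisely the extension $X_\lambda$ built from $\lambda=\Psi(\zeta)$; this isomorphism of short exact sequences is an equivalence in $\yext[A]1QN$, giving $\Phi(\Psi(\zeta))=\zeta$. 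The main obstacle I anticipate is the additivity computation: matching the Baer sum with addition of cycles requires care in tracking how the pullback and pushout reassemble the split representatives, and it is the one place where a sign or indexing slip is most likely to intrude. Everything else reduces to linear-algebra bookkeeping on $2\times2$ matrices of graded maps, combined with the single observation that the cycle condition on $\lambda$ is equivalent to $(\partial^{X_\lambda})^2=0$.
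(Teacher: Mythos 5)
Your proposal is correct and takes essentially the same approach as the paper: the paper also proves additivity by tracking matrix forms through the pullback--pushout construction of the Baer sum, and its injectivity and surjectivity claims are precisely your two remaining ingredients --- the unipotent change of basis $\left[\begin{smallmatrix}1 & s\\ 0 & 1\end{smallmatrix}\right]$ (your well-definedness of $\Phi$) and the construction of the extension $X_\lambda$ from a cycle $\lambda$ (your $\Phi$ together with $\Psi\Phi=\mathrm{id}$). Packaging these as a two-sided inverse rather than as injectivity and surjectivity of $\Psi$ is only a cosmetic difference.
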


\begin{proof}
We break the proof into three claims.
\begin{claim}\label{step131221a}
$\Psi$ is additive.
Let $\zeta,\zeta'\in \yext[A]1QN$ be represented by  exact sequences
$0\to N\xra{\epsilon} X\xra{\pi} Q\to 0$ and
$0\to N\xra{\epsilon'} X'\xra{\pi'} Q\to 0$
respectively, where $X_i=N_i \oplus Q_i= X_i'$
and the differentials $\partial^X$ and $\partial^{X'}$ are described as in~\eqref{eq111018c}
and~\eqref{eq111018e}, respectively.
We need to show that the Baer sum $\zeta+\zeta'$ is represented 
by an exact sequence
$0\to N\xra{\wti\epsilon} \wti X\xra{\wti\pi} Q\to 0$,
where $\wti X_i=N_i\oplus Q_i$
and 
$\partial^{\wti X}_i=
\left[\begin{smallmatrix}
\partial^N_i & \lambda_i+\lambda'_i \\
0 & \partial^Q_i
\end{smallmatrix}\right]$, with scalar multiplication 
$a\left[\begin{smallmatrix}p\\ q\end{smallmatrix}\right]=\left[\begin{smallmatrix}ap\\ aq\end{smallmatrix}\right]$.
Note that it is straightforward to show that the sequence $\wti X$ defined in this way
is a DG $A$-module, and the natural maps 
$N\xra{\wti\epsilon} \wti X\xra{\wti\pi} Q$ are $A$-linear, using the analogous properties
for $X$ and $X'$.

We construct the Baer sum in two steps.
The first step is to construct the pull-back diagram
$$\xymatrix{
X''\ar[r]\ar[d]\ar@{}[rd]|<<{\ulcorner}
&X'\ar[d]^{\pi'} \\
X\ar[r]^{\pi}&Q.
}$$
The DG module $X''$ is a submodule of the direct sum 
$X \oplus X'$, so each 
$X''_i$ is the submodule of 
$$(X \oplus X')_i
=X_i \oplus X'_i
\cong N_i \oplus Q_i \oplus N_i \oplus Q_i
$$
consisting of all vectors $\left[\begin{smallmatrix}x\\ x'\end{smallmatrix}\right]$
such that $\pi_i'(x')=\pi_i(x)$, that is, all vectors
of the form $[\begin{matrix}p& q& p' & q'\end{matrix}]^T$
such that $q=q'$. 
In other words, we have
\begin{equation}
N_i\oplus Q_i \oplus N_i\xra{\cong}X''_i
\label{eq111228a}
\end{equation}
where the isomorphism is given by
$[\begin{matrix}p& q&  p'\end{matrix}]^T
\mapsto
[\begin{matrix}p& q&  p'& q\end{matrix}]^T$.
The differential on $X\oplus X'$
is the natural diagonal map. So, under the isomorphism~\eqref{eq111228a}, 
the differential on $X''$ has the form
$$X''_i
\cong N_i\oplus Q_i \oplus N_i
\xra{\partial^{X''}_i
=\left[\begin{smallmatrix}\partial^N_i & \lambda_i &  0\\
0 & \partial^Q_i &  0 \\
0 & \lambda'_i & \partial^N_i  \end{smallmatrix}\right]}
N_{i-1} \oplus Q_{i-1} \oplus N_{i-1}
\cong X''_{i-1}.$$

The next step in the construction of $\zeta+\zeta'$ is to build $\wti X$, which is the cokernel of the morphism
$\gamma\colon N\to X''$ given by $p\mapsto\left[\begin{smallmatrix}-p\\0\\ p\end{smallmatrix}\right]$.
That is, since $\gamma$ is injective,
the complex $\wti X$ is determined by the  exact sequence
$0\to N\xra{\gamma}X''\xra{\tau}\wti X\to 0$.
It is straightforward to show that this sequence has the following form
$$\xymatrix@C=7mm{
0\ar[r]
&N_i
\ar[rr]^-{\protect{\left[\begin{smallmatrix}-1\\ 0\\ 1\end{smallmatrix}\right]}}
\ar[dd]_{\partial^N_i}
&&N_i\oplus Q_i \oplus N_i
\ar[rr]^-{\protect{\left[\begin{smallmatrix}1& 0& 1\\ 0 & 1 & 0\end{smallmatrix}\right]}}
\ar[dd]^{\protect{\left[\begin{smallmatrix}\partial^N_i & \lambda_i &  0\\
0 & \partial^Q_i &  0 \\
0 & \lambda'_i & \partial^N_i \end{smallmatrix}\right]}}
&&N_i \oplus Q_i\ar[r]
\ar[dd]^{\protect{\left[\begin{smallmatrix}\partial^N_i & \lambda_i+\lambda'_i \\
0 & \partial^Q_i\end{smallmatrix}\right]}}
&0\\ \\
0\ar[r]
&N_{i-1}
\ar[rr]^-{\protect{\left[\begin{smallmatrix}-1\\ 0\\ 1\end{smallmatrix}\right]}}
&&N_{i-1} \oplus Q_{i-1} \oplus N_{i-1}
\ar[rr]^-{\protect{\left[\begin{smallmatrix}1& 0& 1\\ 0 & 1 & 0\end{smallmatrix}\right]}}
&&N_{i-1} \oplus Q_{i-1}\ar[r]
&0.}$$
By inspecting the right-most column of this diagram, we see that $\wti X$ has the desired
form.  Furthermore, checking the module structures at each step of the construction,
we see that
the scalar multiplication on $\wti X$ is the natural one
$a\left[\begin{smallmatrix}p\\ q\end{smallmatrix}\right]=\left[\begin{smallmatrix}ap\\ aq\end{smallmatrix}\right]$.
This concludes the proof of Claim~\ref{step131221a}.
\end{claim}

\begin{claim}\label{step131221b}
$\Psi$ is injective.
Suppose that $\zeta\in\ker(\Psi)$ is represented by the 
displays~\eqref{eq111018a}--\eqref{eq111018c}.
The condition $\Psi(\zeta)=0$ says that $\lambda\in\im(\partial_0^{\Hom[A]QN})$,
so there is an element $s\in\Hom[A]QN_0$ such that $\lambda=\partial_0^{\Hom[A]QN}(s)$.
Thus, for each $i$ we have
$\lambda_i=\partial^N_is_i-s_{i-1}\partial^Q_i$.
From this, it is straightforward to show that the following diagram commutes:
$$\xymatrix@C=20mm@R=10mm{
N_i\oplus Q_i
\ar[d]_{\left[\begin{smallmatrix}
\partial^N_i & \lambda_i \\
0 & \partial^Q_i
\end{smallmatrix}\right]}
\ar[r]_\cong^{\left[\begin{smallmatrix}
1 & s_{i} \\
0 & 1
\end{smallmatrix}\right]}
&N_{i}\oplus  Q_{i}
\ar[d]^{\left[\begin{smallmatrix}
\partial^N_i & 0 \\
0 & \partial^Q_i
\end{smallmatrix}\right]}\\
N_{i-1}\oplus  Q_{i-1}
\ar[r]_\cong^{\left[\begin{smallmatrix}
1 & s_{i-1} \\
0 & 1
\end{smallmatrix}\right]}
&N_{i-1} \oplus Q_{i-1}.
}$$
From the fact that $s$ is $A$-linear, it follows that the maps
$\left[\begin{smallmatrix}
1 & s_{i} \\
0 & 1
\end{smallmatrix}\right]$ describe an $A$-linear isomorphism
$X\xra\cong N\oplus  Q$
making the following diagram commute:
$$\xymatrix{
0\ar[r]
&N \ar[d]_{=}\ar[r]^{\epsilon}
&X
\ar[d]_-\cong
\ar[r]^{\pi}
&Q\ar[d]_{=}\ar[r]
&0
\\
0\ar[r]
&N\ar[r]^-{\epsilon}
&N\oplus  Q\ar[r]^-{\pi}
&Q\ar[r]
&0.
}$$
In other words, the sequence~\eqref{eq111018a} splits, so we have $\zeta=0$,
and $\Psi$ is injective.
This concludes the proof of Claim~\ref{step131221b}.
\end{claim}

\begin{claim}\label{step131221c}
$\Psi$ is surjective.
For this, let $\xi\in\HH_{-1}(\Hom[A] QN)$ be represented by
$\lambda\in\Ker(\partial_{-1}^{\Hom[A]QN})$.
Using the fact that $\lambda$ is $A$-linear such that $\partial_{-1}^{\Hom[A]QN}(\lambda)=0$,
one checks directly that the displays~\eqref{eq111018b}--\eqref{eq111018c}
describe an exact sequence of DG $A$-module homomorphisms 
of the form~\eqref{eq111018a}
whose image under $\Psi$ is $\xi$.
This concludes the proof of Claim~\ref{step131221c} and the proof of the theorem.\qedhere
\end{claim}
\end{proof}

\begin{disc}\label{disc160314b}
After the results of this paper were announced,
Avramov, et al.~\cite{avramov:dgha} established the following generalization of Theorem~\ref{thm131221a}.
\begin{prop}
\label{prop160314a}
Let $A$ be a DG $R$-algebra, and let $M$ and $N$ be DG $A$-modules.
There is a monomorphism of abelian groups
$$\kappa\colon\HH_{0}(\Hom[A]{\shift^{-1}M}N)\to\yext[U]1{M}N$$
with image equal to the set of equivalence classes of graded-split exact sequences of the form
$0\to N\to X\to  M\to 0$.
\end{prop}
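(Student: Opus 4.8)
The plan is to realize $\kappa$ as essentially the inverse of the reindexed map $\Psi$ from Construction~\ref{c131221a}, the point being that although $M$ need not be graded-projective, every \emph{graded-split} extension of $M$ by $N$ still admits the matrix presentation used there. First I would record the identification $\HH_0(\Hom[A]{\shift^{-1}M}N)\cong\HH_{-1}(\Hom[A]MN)$: unwinding the suspension, a degree-$0$ cycle in $\Hom[A]{\shift^{-1}M}N$ is the same datum as an $A$-linear family $\lambda=\{\lambda_i\colon M_i\to N_{i-1}\}$ forming a cycle in $\Hom[A]MN_{-1}$, and two such cycles are homologous exactly when they differ by $\partial_0^{\Hom[A]MN}(v)$ for an $A$-linear $v$ of degree $0$. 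Given a representative $\lambda$, I define $X$ by $\und X=\und N\oplus\und M$ with the natural scalar multiplication $a\left[\begin{smallmatrix}p\\ q\end{smallmatrix}\right]=\left[\begin{smallmatrix}ap\\ aq\end{smallmatrix}\right]$ and differential $\partial^X_i=\left[\begin{smallmatrix}\partial^N_i&\lambda_i\\0&\partial^M_i\end{smallmatrix}\right]$; the cycle condition on $\lambda$ is precisely the statement $(\partial^X)^2=0$, and its $A$-linearity makes the inclusion $\epsilon\colon N\to X$ and projection $\pi\colon X\to M$ into morphisms of DG $A$-modules. Then $\kappa([\lambda])$ is the class of $0\to N\xra{\epsilon} X\xra{\pi} M\to 0$. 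By construction this sequence is graded-split, so the image of $\kappa$ is contained in the set of graded-split classes.

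The remaining properties each transcribe a corresponding piece of the proof of Theorem~\ref{thm131221a}. For well-definedness, if $\lambda-\lambda'=\partial_0^{\Hom[A]MN}(v)$, then the matrices $\left[\begin{smallmatrix}1&v_i\\0&1\end{smallmatrix}\right]$ assemble into an $A$-linear isomorphism of the two extensions, which is the computation of Lemma~\ref{lem131221azz} read in reverse. For the homomorphism property, the pullback-then-pushout construction of Claim~\ref{step131221a} produces, from the extensions attached to $\lambda$ and $\lambda'$, a Baer sum whose differential carries $\lambda+\lambda'$ in the off-diagonal corner; hence $\kappa([\lambda])+\kappa([\lambda'])=\kappa([\lambda+\lambda'])$, and in particular the Baer sum of two graded-split sequences is again graded-split, so the graded-split classes form a subgroup. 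For injectivity, if $\kappa([\lambda])=0$, then the splitting of the sequence yields $s\in\Hom[A]MN_0$ with $\lambda=\partial_0^{\Hom[A]MN}(s)$ as in Claim~\ref{step131221b}, so $[\lambda]=0$.

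Finally, to identify the image with \emph{all} graded-split classes, I would run Construction~\ref{c131221a} in this more general setting: given any graded-split sequence $0\to N\to X\to M\to 0$, choose a graded-splitting to put $\und X$ in the form $\und N\oplus\und M$, so that commutativity forces $\partial^X_i=\left[\begin{smallmatrix}\partial^N_i&\lambda_i\\0&\partial^M_i\end{smallmatrix}\right]$ with $\lambda$ an $A$-linear degree-$(-1)$ cycle, and then $\kappa([\lambda])$ is the given class. The main obstacle is conceptual rather than computational: for general $M$ an extension of $M$ by $N$ need not be degree-wise split, so $\Psi$ is simply undefined on such classes, and the content of the proposition is exactly that restricting to the graded-split subset restores the bijection. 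The most delicate bookkeeping is the suspension reindexing relating $\HH_0(\Hom[A]{\shift^{-1}M}N)$ to the $\lambda$-picture, where one must track signs so that the cycle condition matches $(\partial^X)^2=0$ and homologies correspond correctly; once that is pinned down, the four verifications above are routine adaptations of the graded-projective case.
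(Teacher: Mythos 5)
Your proof is sound, but note an unusual feature of the comparison: the paper itself contains \emph{no} proof of Proposition~\ref{prop160314a} --- it is quoted in Remark~\ref{disc160314b} from Avramov--Foxby--Halperin \cite{avramov:dgha}, established after the paper's results were announced. So the only in-paper argument to measure yours against is the proof of Theorem~\ref{thm131221a}, which is exactly what you adapt. Your central observation is the correct one: graded-projectivity of $Q$ enters Construction~\ref{c131221a} at a single point, namely to guarantee that every extension of $Q$ by $N$ is graded-split, and all subsequent work (Lemma~\ref{lem131221azz} and Claims~\ref{step131221a}--\ref{step131221c}) is carried out purely on the matrix presentation $\partial^X=\left[\begin{smallmatrix}\partial^N&\lambda\\0&\partial^M\end{smallmatrix}\right]$ without invoking projectivity again. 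Hence defining $\kappa$ as the inverse of the (reindexed) $\Psi$, with domain the graded-split classes rather than all of $\yext[A]1MN$, goes through essentially verbatim: the cycle condition on $\lambda$ is equivalent to $(\partial^X)^2=0$, graded $A$-linearity of $\lambda$ is equivalent to the Leibniz rule for the diagonal action, homologous cycles give equivalent extensions via $\left[\begin{smallmatrix}1&v\\0&1\end{smallmatrix}\right]$, and the Baer-sum computation of Claim~\ref{step131221a} yields additivity. The identification $\HH_{0}(\Hom[A]{\shift^{-1}M}N)\cong\HH_{-1}(\Hom[A]MN)$ is the standard (sign-decorated) shift of the Hom complex, as the paper itself notes at the end of Remark~\ref{disc160314b}.

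Two small points deserve care, neither of which is a genuine gap. First, ``graded-split'' must be taken in the paper's sense from Construction~\ref{c131221a}: split by degree-$0$ \emph{$A$-linear} maps $h,k$, not merely degreewise split over $R$. This is what forces the scalar multiplication on $\und{N}\oplus\und{M}$ to be the diagonal one; your phrase ``degree-wise split'' in the final paragraph is looser than the splitting your construction actually uses. Second, for injectivity you cite Claim~\ref{step131221b}, but you need the \emph{converse} of what that claim literally proves: you must show that a DG-splitting of $\kappa([\lambda])$, say a retraction $F=[\,\id_N \ \ t\,]\colon X\to N$, forces $\lambda=\partial_0^{\Hom[A]MN}(t)$. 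This is an easy direct computation; alternatively it follows immediately from Lemma~\ref{lem131221azz} (whose proof nowhere uses graded-projectivity) applied to the equivalence between $\kappa([\lambda])$ and the trivial extension $\kappa([0])$.
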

\noindent To see how this generalizes Theorem~\ref{thm131221a}, first note that 
if $M$ is graded-projective, then the map $\kappa$ is bijective, as in this case every element of 
$\yext[U]1{M}N$ is graded-split; thus, we have
$\HH_{-1}(\Hom[A]{M}N)\cong\HH_{0}(\Hom[A]{\shift^{-1}M}N)\cong\yext[U]1{M}N$.
\end{disc}

\begin{Proof}[Proof of Theorem~\ref{lem110922a}] \label{proof160326a}
Using Theorem~\ref{thm131221a}, we need only justify the isomorphism 
$\yext[A]iQN\cong\Ext[A]iQN$ for $i\geq 2$.
Let 
$$L_{\bullet}^+
=\cdots\xra{\partial^L_2} L_1
\xra{\partial^L_1}L_0
\xra\pi Q\to 0$$
be a  resolution of $Q$ by categorically projective DG $A$-modules.
Since each $L_j$ is categorically projective, we have $\yext[A]i{L_j}-=0$ for all $i\geq 1$
and $L_j\simeq 0$ for each $j$,
so we have $\Ext[A]i{L_j}-=0$ for all $i$.
Set $Q_i:=\im\partial^L_i$ for each $i\geq 1$.
Each $L_i$ is graded-projective, 
so the fact that $Q$ is graded-projective implies that each $Q_i$ is graded-projective.

Now,  a straightforward dimension-shifting argument
explains the first and third isomorphisms in the following display for $i\geq 2$:
$$\yext[A]i QN\cong\yext[A]1{Q_{i-1}}N\cong\Ext[A]1{Q_{i-1}}N\cong\Ext[A]i QN.$$
The second isomorphism is from Theorem~\ref{thm131221a}
since each $Q_i$ is graded-projective.
\qed
\end{Proof}

The next  example shows that one can have $\yext[A]0QN\not\cong\Ext[A]0QN$,
even when $Q$ is semi-free.

\begin{ex}\label{ex111103a}
Continue with the assumptions and notation of Example~\ref{ex111018a'},
and set $Q=N=\underline R$. It is straightforward to show that the morphisms
$\underline R\to \underline R$ are precisely given by multiplication by
fixed elements of $R$, so we have the first step in the next display:
$$\yext[A]0{\underline R}{\underline R}\cong R\neq 0=
\Ext[A]0{\underline R}{\underline R}.$$
The third step follows from the condition $\underline R\simeq 0$.
\end{ex}

\begin{disc}
It is perhaps worth noting that our proofs can also be used to give the isomorphisms from Theorem~\ref{lem110922a}
when $Q$ is not necessarily semi-projective, but $N$ is ``semi-injective''.
\end{disc}

\section{$\operatorname{YExt}^1$ and Truncations}\label{sec160326a}

For our work in~\cite{nasseh:gart},
we need to know how YExt respects the following notion.

\begin{defn}\label{truncations}
Let $A$ be a  DG $R$-algebra, and let $M$ be a DG $A$-module.
Given an integer $n$, 
the \emph{$n$th soft left truncation of $M$} is the complex
$$
\tau(M)_{(\leq n)}:=\cdots\to 0\to M_n/\im(\partial^M_{n+1})\to M_{n-1}\to M_{n-2}\to \cdots
$$
with differential induced by $\partial^M$.
In other words, $\tau(M)_{(\leq n)}$ is the quotient DG $A$-module
$M/M'$ where $M'$ is the following DG submodule of $M$:
$$M'=\cdots\to M_{n+2}\to M_{n+1}\to\im(\partial^M_{n+1})\to 0.$$
Note that we have $M'\simeq 0$ if and only if $n\geq \sup(M)$,
so the natural morphism $\rho\colon M\to \tau(M)_{(\leq n)}$ of DG $A$-modules
yields an isomorphism in $\catd(A)$ if and only if $n\geq \sup(M)$.
\end{defn}

\begin{prop}\label{lem111103a}
Let $A$ be a DG $R$-algebra, and let $M$ and $N$ be DG $A$-modules.
Assume that $n$ is an integer such that $N_i=0$ for all $i>n$. Then the natural map
$\yext[A]1{\tau(M)_{(\leq n)}}N\to \yext[A]1{M}N$ induced by the
morphism $\rho\colon M\to\tau(M)_{(\leq n)}$ from Definition~\ref{truncations} is a monomorphism.
\end{prop}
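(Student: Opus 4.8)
The plan is to realize $\rho$ as the surjection in a short exact sequence and feed that sequence into the long exact sequence for Yoneda Ext. By Definition~\ref{truncations} the morphism $\rho\colon M\to\tau(M)_{(\leq n)}$ is the canonical quotient map with kernel $M'$, so there is a short exact sequence of DG $A$-modules
$$0\to M'\to M\xra{\rho}\tau(M)_{(\leq n)}\to 0,$$
where $M'=\cdots\to M_{n+1}\to\im(\partial^M_{n+1})\to 0$ is concentrated in degrees $\geq n$. Since the category of DG $A$-modules is abelian with enough projectives (Remark~\ref{disc110616a''}) and $\yext[A]{\bullet}{-}{N}$ is the corresponding derived functor computed from categorically projective resolutions (Definition~\ref{disc110616a}), this sequence induces the usual long exact sequence, whose relevant segment reads
$$\yext[A]0{M'}{N}\xra{\delta}\yext[A]1{\tau(M)_{(\leq n)}}{N}\xra{\rho^*}\yext[A]1{M}{N}.$$
Here $\rho^*$ is exactly the map in the statement, and by exactness $\ker(\rho^*)=\im(\delta)$. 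Thus it will suffice to prove that $\yext[A]0{M'}{N}=0$, for then $\delta=0$ and $\rho^*$ is a monomorphism.

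The crux is therefore this vanishing, and I would establish it by a direct degree count. Recall that $\yext[A]0{M'}{N}$ is the group of DG $A$-module morphisms $M'\to N$, i.e. the $A$-linear chain maps $f\colon M'\to N$. I would show every component of such an $f$ is zero: for $i>n$ the hypothesis gives $N_i=0$, hence $f_i=0$; for $i<n$ we have $M'_i=0$, hence $f_i=0$; and the only surviving component is $f_n\colon M'_n\to N_n$. The chain-map identity in degree $n+1$, namely $\partial^N_{n+1}f_{n+1}=f_n\,\partial^{M'}_{n+1}$, reduces to $f_n\,\partial^{M'}_{n+1}=0$ because $f_{n+1}=0$. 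Since $M'_n=\im(\partial^M_{n+1})$, the map $\partial^{M'}_{n+1}\colon M_{n+1}\to M'_n$ is the corestriction of $\partial^M_{n+1}$ to its image and hence is surjective; therefore $f_n=0$. This forces $f=0$, giving $\yext[A]0{M'}{N}=0$ as required.

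The only genuinely nonformal step — and the single place where the hypothesis $N_i=0$ for $i>n$ interacts with the specific shape of the truncation — is this last vanishing: it is precisely the surjectivity of $M_{n+1}\to M'_n=\im(\partial^M_{n+1})$ that annihilates the one potentially nonzero component $f_n$. Everything else is the standard long-exact-sequence formalism for Yoneda Ext in the abelian category of DG $A$-modules. (If one prefers to avoid invoking the long exact sequence, the same conclusion can be reached by extensions: a class in $\yext[A]1{\tau(M)_{(\leq n)}}{N}$ whose pullback along $\rho$ splits must already split, the obstruction to propagating such a splitting being measured by $\yext[A]0{M'}{N}$, which the computation above shows is trivial.)
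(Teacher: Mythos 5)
Your proof is correct, but it takes a different route from the paper's. The paper argues directly with extension classes: given $\alpha\in\ker(\Upsilon)$ represented by $0\to N\to X\to\tau(M)_{(\leq n)}\to 0$, it forms the pullback $\beta$ of this sequence along $\rho$, uses $[\beta]=0$ to obtain a left-splitting $F\colon\wti X\to N$ of the pulled-back sequence, proves the claim $F\circ\wti h=0$ where $\wti h\colon K\to\wti X$ is the inclusion of $K=\Ker(\rho)$, and then descends $F$ to a left-splitting $\ol F\colon X\to N$ of the original sequence, forcing $[\alpha]=0$. You replace this splitting-transfer argument with the long exact sequence for $\yext[A]{\bullet}{-}{N}$ associated to $0\to M'\to M\xra{\rho}\tau(M)_{(\leq n)}\to 0$, reducing the proposition to the vanishing $\yext[A]0{M'}{N}=0$. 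Notably, the crux computation is the same in both proofs: your degree count showing that every DG $A$-module morphism $M'\to N$ vanishes (zero in degrees $\neq n$ for support reasons, and zero in degree $n$ because $\partial^{M'}_{n+1}\colon M_{n+1}\to\im(\partial^M_{n+1})$ is surjective) is essentially word-for-word the paper's proof of its claim $F\circ\wti h=0$, since the paper's $K$ is exactly your $M'$ and $F\circ\wti h$ is a morphism $K\to N$. What your route buys is brevity and conceptual clarity: the diagram chase and the descent of the splitting are absorbed into standard homological algebra, and exactness even identifies $\ker(\Upsilon)$ as the image of the connecting map in general. What it costs is that you must invoke two standard but nontrivial facts: that the Yoneda Ext of Definition~\ref{disc110616a}, computed from categorically projective resolutions (available by Remark~\ref{disc110616a''}), satisfies the contravariant long exact sequence in this abelian category, and that the derived-functor map $\rho^*$ agrees with the pullback map on extension classes appearing in the statement; both follow from the material around \cite[(3.4.6)]{weibel:iha}, whereas the paper's hands-on argument is self-contained.
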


\begin{proof}
Let $\Upsilon$ denote the map $\yext[A]1{\tau(M)_{(\leq n)}}N\to \yext[A]1{M}N$
induced by  $\rho$.
Let $\alpha\in\ker(\Upsilon)\subseteq\yext[A]1{\tau(M)_{(\leq n)}}{N}$ be represented by the exact sequence 
\begin{equation}\label{eq111108a}
0\to N\xra{f} X\xra{g} \tau(M)_{(\leq n)}\to 0.
\end{equation}
Note that, since $N_i=0=(\tau(M)_{(\leq n)})_i$ for all $i>n$, we have $X_i=0$ for all $i>n$.
Our assumptions imply that $0=\Upsilon([\alpha])=[\beta]$ where $\beta$ comes from the following pull-back diagram: 
\begin{equation}\label{eq111108c}
\begin{split}
\xymatrix{
&&  0\ar[d] & 0\ar[d]& 0\ar[d]&\\
&  0\ar[r] & 0\ar[d]\ar[r]&  K\ar[r]^{=}\ar[d]_{\wti h}&K\ar[r]\ar[d]_{h}&0\\
\beta:&0\ar[r]  & N\ar[d]_{=}\ar[r]^-{\wti f}&\widetilde{X} \ar@{}[rd]|<<{\ulcorner}\ar[r]^-{\wti{g}}\ar[d]_{\wti{\rho}}
& M\ar[r]\ar[d]_{\rho}&0\\
\alpha:&0\ar[r]  & N\ar[d]\ar[r]^-f& X\ar[d]\ar[r]^-g& \tau(M)_{(\leq n)}\ar[d]\ar[r]&0\\
& & 0 & 0& 0.
}
\end{split}
\end{equation}
The middle row $\beta$ of this diagram is split exact
since $[\beta]=0$, so there is a 
morphism $F\colon \wti X\to N$ of DG $A$-modules
such that $F\circ \wti f=\id_N$.
Note that $K$ has the form
\begin{equation}\label{eq111108d}
K=\cdots\xra{\partial^M_{n+2}}M_{n+1}\xra{\partial^M_{n+1}}\im(\partial^M_{n+1})\to 0
\end{equation}
because of the right-most column of the diagram.

We claim that $F\circ\wti h=0$.
It suffices to check this degree-wise.
When $i>n$, we have $N_i=0$, so $F_i=0$, and  $F_i\circ\wti h_i=0$.
When $i<n$, the display~\eqref{eq111108d}
shows that $K_i=0$, so $\wti h_i=0$, and  $F_i\circ\wti h_i=0$.
For $i=n$, we first note that the display~\eqref{eq111108d}
shows that $\partial^K_{n+1}$ is surjective.
In the following diagram, the faces with solid arrows commute because
$\wti h$ and $F$ are morphisms:
$$\xymatrix{
0\ar@{..>}[rd]\ar@{..>}[dd]
&&K_{n+1}\ar[rd]^-{\wti h_{n+1}}\ar@{->>}[dd]_<<<<<{\partial^{K}_{n+1}}\ar@{..>}[ll]
\\
&0\ar[dd]
&&\wti X_{n+1}\ar[dd]^-{\partial^{\wti X}_{n+1}}\ar[ll]^>>>>>>>{F_{n+1}}
\\
0\ar@{..>}[rd]
&&K_{n}\ar[rd]^-{\wti h_{n}}\ar@{..>}[ll]
\\
&N_n
&&\wti X_{n}\ar[ll]^-{F_n}
}$$
Since $\partial^K_{n+1}$ is surjective, a simple diagram chase shows that
$F_n\circ\wti h_n=0$.
This establishes the claim.

To conclude the proof, note that the previous claim shows that the map
$K\to 0$ is a left-splitting of the top row of diagram~\eqref{eq111108c}
that is compatible with the left-splitting $F$ of the middle row. 
It is now straightforward to show that $F$ induces a 
morphism $\ol F\colon X\to N$ of DG $A$-modules that left-splits
the bottom row of diagram~\eqref{eq111108c}.
Since this row represents $\alpha\in\yext[A]1{\tau(M)_{(\leq n)}}{N}$,
we conclude that $[\alpha]=0$, so $\Upsilon$ is a monomorphism.
\end{proof}

The next example shows that the monomorphism from Proposition~\ref{lem111103a}
may not be an isomorphism.

\begin{ex}\label{ex111103b}
Continue with the assumptions and notation of Example~\ref{ex111018a'}.
The following diagram describes a non-zero element of
$\yext[R]1MN$:
$$\xymatrix{
0\ar[r]
&N\ar[r]
&\underline R\ar[r]
&M\ar[r]
&0 \\
& 0\ar[d]
& 0\ar[d]
& 0\ar[d] \\
0\ar[r]
& 0\ar[r]\ar[d]
& R\ar[r]^1\ar[d]_-1
& R\ar[r]\ar[d]_-\pi
&0 \\
0\ar[r]
& R\ar[r]^-X\ar[d]
& R\ar[r]^\pi\ar[d]
& k\ar[r]\ar[d]
&0 \\
&0&0&0.
}$$
It is straightforward to show that $\tau(M)_{(\leq 0)}=0$, so we have
$$0=\yext[A]1{\tau(M)_{(\leq 0)}}N\into \yext[A]1{M}N\neq 0$$
thus this map is not an isomorphism.
\end{ex}

\begin{prop}\label{prop111110a}
Let $A$ be a DG $R$-algebra, and let $C$ be a semi-projective 
DG $A$-module such that $\Ext 1CC=0$. For  $n\geq\sup(C)$, one has
$$\yext[A]1CC=0=\yext[A]1{\tau(C)_{(\leq n)}}{\tau(C)_{(\leq n)}}.$$
\end{prop}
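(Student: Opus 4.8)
The proposition has two equalities to establish. The first, $\yext[A]1CC=0$, follows immediately from the machinery already in place: since $C$ is semi-projective, Theorem~\ref{thm131221a} gives $\yext[A]1CC\cong\HH_{-1}(\Hom[A]CC)$, and for semi-projective $C$ we have $\HH_{-1}(\Hom[A]CC)\cong\Ext[A]1CC$, which is zero by hypothesis. So the real content is the second equality, $\yext[A]1{\tau(C)_{(\leq n)}}{\tau(C)_{(\leq n)}}=0$, for $n\geq\sup(C)$. The natural strategy is to transport the vanishing of $\yext[A]1CC$ across the truncation morphism $\rho\colon C\to\tau(C)_{(\leq n)}$ using Proposition~\ref{lem111103a}, applied with both arguments suitably chosen.

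\textbf{Key steps.}
First I would set $T:=\tau(C)_{(\leq n)}$ for brevity and record that, since $n\geq\sup(C)$, Definition~\ref{truncations} tells us $\rho\colon C\to T$ is a quasiisomorphism (an isomorphism in $\catd(A)$). Next I would apply Proposition~\ref{lem111103a} with $M=C$ and with the target module equal to $T$: the hypothesis of that proposition requires $T_i=0$ for all $i>n$, which holds by construction of the soft left truncation. This yields a monomorphism
$$\yext[A]1{\tau(C)_{(\leq n)}}{T}\to\yext[A]1{C}{T}$$
induced by $\rho$; that is, $\Upsilon\colon\yext[A]1{T}{T}\hookrightarrow\yext[A]1{C}{T}$ is injective. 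It therefore suffices to show that $\yext[A]1CT=0$. To get this, I would use that $C$ is semi-projective and invoke Theorem~\ref{thm131221a} again, now with $Q=C$ and $N=T$, to obtain
$$\yext[A]1CT\cong\HH_{-1}(\Hom[A]CT).$$
Because $C$ is semi-projective and $\rho\colon C\xra{\simeq}T$ is a quasiisomorphism, the induced map $\Hom[A]CC\to\Hom[A]CT$ is a quasiisomorphism, so $\HH_{-1}(\Hom[A]CT)\cong\HH_{-1}(\Hom[A]CC)\cong\Ext[A]1CC=0$. Chasing back through the monomorphism $\Upsilon$ forces $\yext[A]1{T}{T}=0$, as required.

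\textbf{The main obstacle.}
The step I expect to require the most care is the identification $\yext[A]1CT\cong\HH_{-1}(\Hom[A]CT)=0$, and in particular the claim that $\rho_*\colon\Hom[A]CC\to\Hom[A]CT$ is a quasiisomorphism. This is exactly the defining property of semi-projectivity of $C$: by Definition~\ref{DGK2}, $\Hom[A]C-$ preserves quasiisomorphisms when $C$ is semi-projective, so applying it to the quasiisomorphism $\rho$ gives the desired quasiisomorphism on Hom complexes, whence the homology groups agree in every degree. One subtlety worth flagging is that Proposition~\ref{lem111103a} is stated for a fixed second argument, so I must apply it with the second module genuinely equal to $T$ (not $C$); the hypothesis $T_i=0$ for $i>n$ is what makes this legitimate, and it is automatic from the shape of the truncation. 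Apart from verifying these compatibilities, the argument is a direct diagram chase through the results already proved, so I would not expect to grind through any new computation.
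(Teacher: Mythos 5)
Your proposal is correct and follows essentially the same route as the paper: Theorem~\ref{thm131221a} for the first equality, then the vanishing of $\yext[A]1C{\tau(C)_{(\leq n)}}$ combined with the monomorphism from Proposition~\ref{lem111103a} for the second. The only cosmetic difference is that you justify $\yext[A]1C{\tau(C)_{(\leq n)}}=0$ at the chain level (semi-projectivity makes $\Hom[A]C{\rho}$ a quasiisomorphism), whereas the paper phrases the same fact as invariance of $\Ext[A]1C-$ under the isomorphism $C\simeq\tau(C)_{(\leq n)}$ in $\catd(A)$.
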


\begin{proof}
From Theorem~\ref{thm131221a}, we have
$\yext[A]1CC\cong\Ext[A]1CC=0$.
For the remainder of the proof, assume without loss of generality that
$\sup(C)<\infty$. 
Another application of Theorem~\ref{thm131221a} explains the first step
in the next display:
\begin{align*}
\yext[A]1C{\tau(C)_{(\leq n)}}
\cong\Ext[A]1C{\tau(C)_{(\leq n)}}
\cong
\Ext[A]1CC=0.
\end{align*}
The second step comes from the assumption $n\geq\sup(C)$
which guarantees that the natural morphism
$C\to \tau(C)_{(\leq n)}$ represents an isomorphism in $\catd(A)$.
Proposition~\ref{lem111103a} implies that
$\yext[A]1{\tau(C)_{(\leq n)}}{\tau(C)_{(\leq n)}}$ is isomorphic to a 
subgroup of $\yext[A]1C{\tau(C)_{(\leq n)}}=0$, so  
$\yext[A]1{\tau(C)_{(\leq n)}}{\tau(C)_{(\leq n)}}=0$, as desired.
\end{proof}

\section*{Acknowledgments}
We are grateful to Luchezar Avramov for helpful discussions about this work.

\providecommand{\bysame}{\leavevmode\hbox to3em{\hrulefill}\thinspace}
\providecommand{\MR}{\relax\ifhmode\unskip\space\fi MR }
\providecommand{\MRhref}[2]{%
  \href{http://www.ams.org/mathscinet-getitem?mr=#1}{#2}
}
\providecommand{\href}[2]{#2}

\end{document}